\newtheorem{theorem}{{Theorem}}[section]
\newtheorem{definition}[theorem]{{Definition}}
\newtheorem{proposition}[theorem]{{Proposition}}
\newcommand{\R}{\mathbb{R}}
\newcommand{\C}{\mathbb{C}}
\newcommand{\N}{\mathbb{N}}
\newcommand{\D}{\mathbb{D}}
\newcommand{\s}{\mathbb{S}}
\newcommand{\dif}{\mathsf{D}}
\newcommand{\J}{\mathsf{J}}
\title{Global phase portraits of a SIS model}
\date{}
\author{\scshape{Regilene D. S. Oliveira} and \scshape{Alex C. Rezende} \\ \small{\it Departamento de Matem\'{a}tica, ICMC, USP, S\~{a}o Carlos, Brazil} \\ \small{{\it E-mails:} \url{regilene@icmc.usp.br}, \url{arezende@icmc.usp.br}}}
\begin{document}

\maketitle

\abstract{In the qualitative theory of ordinary differential equations, we can find many papers whose objective is the classification of all the possible topological phase portraits of a given family of differential system. Most of the studies rely on systems with real parameters and the study consists of outlining their phase portraits by finding out some conditions on the parameters. Here, we studied a susceptible-infected-susceptible (SIS) model described by the
differential system $\dot{x}=-bxy-mx+cy+mk$, $\dot{y}=bxy-(m+c)y$, where $b$, $c$, $k$, $m$ are real parameters with $b \neq 0$, $m \neq 0$ \cite{Brauer}. Such system describes an infectious disease from which infected people recover with immunity against reinfection. The integrability of such system has already been studied by Nucci and Leach \cite{NucciLeach} and Llibre and Valls \cite{LlibreValls}. We found out two different topological classes of phase portraits.}

\bigskip
\noindent\textbf{Key-words:} SIS epidemic model, global phase portrait, endemic and disease-free steady states.

\section{Introduction} \label{intro}

There is a long time that scientists have been curious about the interaction between portions of a population with particular characteristics, e.g. prey and predator interaction \cite{Boyce}. In 1838, Verhulst \cite{Verhulst} proposed the study of the population growth by means of the so called logistic equation (see also \cite{Boyce}). In contrast, this equation has had other applications, for instance in the study of the spread and the evolution of diseases. The means of how diseases spread has stimulated the interest mainly within researchers of the biological field \cite{Shi}.

As suggested above, the application of mathematical tools in different areas of sciences has been frequent and has had a great impact on the scientific and/or experimental conclusions. For example, Lotka (1925) and Volterra (1926) described independently the dynamics present in biological interactions between two species using ordinary differential equations (ODE). Their model is now known as the Lotka-Volterra equations and they are the most studied equations in the qualitative theory of ODE (see \cite{Boyce,Dana} for more details).

Another approach of the ODE is their application in the study of infectious diseases. According to Levin \cite{Levin}, this study represents one of the oldest and richest areas in math\-e\-mat\-i\-cal biology. Besides, continuous models composed by ODE have formed a large part of the traditional mathematical epidemiology literature, mainly because mathematicians have been attracted by applying the ODE's tools, such as their qualitative theory, to the study of infectious diseases in the attempt of using mathematics to contribute positively to the science field and because the mathematical models become indispensable to inform decision-making.

Particularly, we consider the system of first-order ODE
\begin{equation}
    \begin{array}{ccl}
        \dot{x} & = & -bxy - mx + cy + mk, \\
        \dot{y} & = & bxy - (m+c)y, \\
    \end{array}
    \label{SIS}
\end{equation}
where $x$ and $y$ represent, respectively, the portion of the population that has been susceptible to the infection and those who have already been infected. System \eqref{SIS} is a particular case of the class of classical systems known as susceptible-infected-susceptible (SIS) models, introduced by Kermack and McKendrick \cite{Kermack} and studied by Brauer \cite{Brauer}, who has assumed that recovery from the nonfatal infective disease does not yield immunity. In system \eqref{SIS}, $k$ is the population size (susceptible people plus infected ones), $mk$ is the constant number of births, $m$ is the proportional death rate, $b$ is the infectivity coefficient of the typical Lotka-Volterra interaction term and $c$ is the recovery coefficient. As system \eqref{SIS} is assumed to be nonfatal, the standard term removing dead infected people $-ay$ in \cite{Brauer} is omitted. As usual in the literature, all the critical points of system \eqref{SIS} will henceforth be called (endemic) steady states (e.g. see \cite{Leon3}). In the rest of the paper we will study the phase portraits of the differential system \eqref{SIS} with $bm\neq 0$. Note that if $b=0$, then system \eqref{SIS} becomes linear, and if $m=0$, then system \eqref{SIS} satisfies that $\dot{x} + \dot{y}=0$. These last two cases are trivial and non interesting from a biological point of view.

Much has been studied on SIS models. The great part of the studies present only local stability results for which many mathematical tools are available. In contrast, global studies of these models are very limited due to the lack of applicable theories. The main mathematical tool which has been used for this purpose is the Lyapunov function. The principal limitation is that such functions are not the same for all types of systems (see \cite{Leon1,Leon2} and references there listed for further details).

Besides the stability of SIS models, their integrability has also been studied. For example, Nucci and Leach \cite{NucciLeach} have demonstrated that \eqref{SIS} is integrable using the Painlev\'{e} test. Later, Llibre and Valls \cite{LlibreValls} have proved that system \eqref{SIS} is Darboux integrable, and they have shown the explicit expression of its first integral and all its invariant algebraic curves.

Alternatively, the attempt of outlining the global phase portraits of ODE systems is a possible way to determine their global behavior.

Here, the purpose was to classify all the topological classes of the global phase portraits of system \eqref{SIS} using some information in \cite{LlibreValls}.

The main result in this paper is the following:

\begin{theorem} \label{mainthm}
The phase portrait on the Poincar\'{e} disc of system \eqref{SIS} is topologically e\-quiva\-lent to one of the two phase portraits shown in Figure \ref{phaseportraits}, modulo reversibility.
\end{theorem}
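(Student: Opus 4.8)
The plan is to apply the Poincar\'{e} compactification to system \eqref{SIS} and to reconstruct its global phase portrait on the disc from a complete description of the finite and infinite singularities together with the first integral furnished by Llibre and Valls \cite{LlibreValls}. First I would locate the finite steady states by solving $\dot{x}=\dot{y}=0$. The equation $\dot{y}=0$ factors as $y\bigl(bx-(m+c)\bigr)=0$, producing the disease-free state on the line $y=0$ and a candidate endemic state on the line $bx=m+c$; substituting back into $\dot{x}=0$ gives exactly the two finite points $(k,0)$ and $\bigl((m+c)/b,\,k-(m+c)/b\bigr)$, which collide precisely when $bk=m+c$. This already signals that the sign of $\lambda:=bk-(m+c)$ (equivalently, whether the basic reproduction number $bk/(m+c)$ exceeds $1$) is the only genuine bifurcation parameter.

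Next I would linearize at each finite point. At $(k,0)$ the Jacobian is triangular with eigenvalues $-m$ and $\lambda$, so the point is an attracting node when $\lambda<0$ and a saddle when $\lambda>0$. At the endemic point, the relation $bx=m+c$ simplifies the Jacobian to one of trace $-\lambda-m$ and determinant $m\lambda$, so that its stability is exactly the reverse of that of $(k,0)$. The two states thus undergo a transcritical-type exchange of stability as $\lambda$ passes through $0$, and this is the mechanism I expect to yield the two topological classes of the statement; the sign of $m$ and the orientation of time are what the phrase ``modulo reversibility'' should absorb.

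The core of the argument is the behavior at infinity. Homogenizing the quadratic part $(-bxy,\,bxy)$, the infinite singularities lie in the directions where $bxy(x+y)$ vanishes, namely along the axes and along $x+y=0$; I would study each in the charts $U_1,U_2,V_1,V_2$ and resolve any degenerate ones by blow-up, since a quadratic Lotka--Volterra-type field typically produces nonhyperbolic points on the equator. With all local portraits in hand, I would then call on the explicit first integral and invariant algebraic curves of \cite{LlibreValls}: the invariant curves (in particular the invariant line $y=0$) furnish the separatrices, while the mere existence of a first integral rules out limit cycles and forces every orbit onto a level set, which fixes the global separatrix connections without further estimates.

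Finally I would assemble the separatrix configuration in each of the cases $\lambda>0$ and $\lambda<0$ and invoke the Markus--Neumann--Peixoto theorem, by which the configuration determines the phase portrait up to topological equivalence, so that the two cases reproduce exactly the two portraits of Figure \ref{phaseportraits}. The main obstacle I anticipate is the infinite-singularity analysis: the nonhyperbolic points on the equator will demand careful blow-ups, and matching the resulting sectors to the global flow prescribed by the first integral is the delicate step, since a miscount there would alter the number of canonical regions and hence the final classification.
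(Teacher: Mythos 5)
Your computations for the finite singularities are sound and agree with the paper: the two steady states are $(k,0)$ and $((m+c)/b,\,k-(m+c)/b)$, the determinants of the two Jacobians are $-m\lambda$ and $m\lambda$ with $\lambda=bk-(m+c)$, and the directions at infinity are indeed $y=0$, $x=0$ and $x+y=0$ (the paper classifies the corresponding equator points as saddle-nodes of type SN1 and a node using the semi-hyperbolic normal form, so no blow-up is needed). The use of the invariant lines and of the Darboux first integral to exclude limit cycles and pin down separatrix connections is a legitimate variant of the paper's argument.

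The genuine error is in your final assembly: you assign the two portraits of Figure \ref{phaseportraits} to the cases $\lambda>0$ and $\lambda<0$. That is not the correct dichotomy. In both of those cases one finite point is a saddle and the other a node, the saddle and the node both lie on the invariant line $k-x-y=0$ with $q$ also on $y=0$, and the infinite singularities are unchanged; the only difference is whether the endemic point sits above or below the line $y=0$, which a homeomorphism of the disc absorbs. Hence $\lambda>0$ and $\lambda<0$ yield \emph{topologically equivalent} portraits (Figure \ref{phaseportraits}(a), modulo reversibility). The second topological class, Figure \ref{phaseportraits}(b), comes from the degenerate case $\lambda=0$, where the two finite points collide. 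You note the collision and the transcritical exchange but never carry this case through: the merged point has a nilpotent-free but singular Jacobian (one zero and one nonzero eigenvalue), and one must invoke the semi-hyperbolic classification (Theorem \ref{th2.19} in the paper) to conclude it is a saddle-node before the corresponding global portrait can be drawn. Without classifying that degenerate point and without recognizing that the two hyperbolic cases coincide topologically, your argument would produce the wrong pair of classes.
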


\begin{figure}[hbt!]
    \centering
    \includegraphics[scale=0.5]{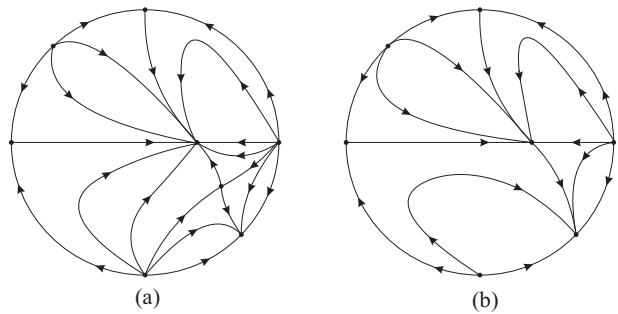}
    \caption{Phase portraits in the Poincar\'{e} disc of system \eqref{SIS}}
    \label{phaseportraits}
\end{figure}

The paper is organized as follows. Basic definitions and stated results about singular points on the plane and their classification are given in Section \ref{secbasicresults}. The reader who is familiar to these concepts can skip this section with no worries of missing new information. In Section \ref{secanalysis}, the local analysis of the finite and infinite singular points of system \eqref{SIS} is done, and so is the existence of at least three invariant straight lines for such system. Section \ref{secmainresult} collects all the information provided in the previous sections and proves Theorem \ref{mainthm}. Finally, Section \ref{secconcl} concludes and discusses the results from the biological point of view.

\section{Basic results} \label{secbasicresults}

In this section, we introduce some definitions and stated theorems which shall be used in the next section for the analysis of the local phase portraits of finite and infinite singularities of system \eqref{SIS}.

\subsection{Singular points}

We denote by $X(x,y) = (P(x,y),Q(x,y))$ a vector field in $\R^2$, associated to the system
\begin{equation}
    \begin{array}{ccl}
        \dot{x} & = & P(x,y), \\
        \dot{y} & = & Q(x,y), \\
    \end{array}
    \label{vectorfield}
\end{equation}
where $P$ and $Q$ are real polynomials in the variables $x$ and $y$ of degree at most $n \in \N$. We recall that the degree $n$ is defined by the maximum of $\deg(P)$ and $\deg(Q)$. We denote by $P_n(\R^2)$ the set of all polynomial vector fields on $\R^2$ of the form \eqref{vectorfield}.

\begin{definition}
A point $p \in \R^2$ is said to be a {\bf singular point}, or a {\bf singularity}, of the vector field $X=(P,Q)$, if $P(p)=Q(p) = 0$.
\end{definition}

We denote by $P_x$ the partial derivative of $P$ with respect to $x$. Suppose that $p \in \R^2$ is a singular point of $X$ and that $P$ and $Q$ are analytic functions in a neighborhood of $p$. Let $\delta = P_x(p) Q_y(p) - P_y(p) Q_x(p)$ and $\tau = P_x(p) + Q_y(p)$.

\begin{definition}
The singular point $p$ is said to be {\bf non-degenerate} if $\delta \neq 0$.
\end{definition}

As a consequence, $p$ is an isolated singular point. Furthermore, $p$ is a {\it saddle} if $\delta < 0$, a {\it node} if $\tau^2 - 4\delta > 0$ ({\it stable} if $\tau <0$, {\it unstable} if $\tau > 0$), a {\it focus} if $\tau^2 - 4\delta < 0$ ({\it stable} if $\tau <0$, {\it unstable} if $\tau > 0$), and either a {\it weak focus} or a {\it center} if $\tau = 0$ and $\delta>0$ \cite{Llibre}.

\begin{definition}
The singular point $p$ is said to be {\bf hyperbolic} if the two eigenvalues of the Jacobian matrix $\J X(p)$ have nonzero real part.
\end{definition}

Hence, the hyperbolic singularities are the non-degenerate ones, except the weak foci and the centers.

\begin{definition}
The degenerate singular point $p$ (i.e. $\delta = 0$), with $\tau \neq 0$, is called {\bf semi-hyperbolic}.
\end{definition}

Again $p$ is isolated in the set of all singular points. Specifically, the next theorem summarizes the results on semi-hyperbolic singularities that shall be used later. For more details, see Theorem 2.19 of \cite{Llibre}.

\begin{theorem} \label{th2.19}
Let $(0,0)$ be an isolated singular point of the vector field $X$ given by
    \begin{equation}
        \begin{array}{ccl}
            \dot{x} & = & A(x,y), \\
            \dot{y} & = & \lambda y + B(x,y), \\
        \end{array}
        \label{eqth2.19}
    \end{equation}
where $A$ and $B$ are analytic in a neighborhood of the origin starting with at least degree 2 in the variables $x$ and $y$. Let $y=f(x)$ be the solution of the equation $\lambda y + B(x,y) = 0$ in a neighborhood of the point $(0,0)$, and suppose that the function $g(x) = A(x,f(x))$ has the expression $g(x) = a x^\alpha + o(x^\alpha)$, where $\alpha \geq 2$ and $a \neq 0$. So, when $\alpha$ is odd, then $(0,0)$ is either an unstable node, or a saddle, depending if $a>0$, or $a<0$, respectively. In the case of the saddle, the separatrices are tangent to the $x$-axis. If $\alpha$ is even, the $(0,0)$ is a saddle-node, i.e. the singular point is formed by the union of two hyperbolic sectors with one parabolic sector. The stable separatrix is tangent to the positive (respectively, negative) $x$-axis at $(0,0)$ according to $a<0$ (respectively, $a>0$). The two unstable separatrices are tangent to the $y$-axis at $(0,0)$.
\end{theorem}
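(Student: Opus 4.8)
The plan is to use the semi-hyperbolic structure of the origin: the linear part of $X$ has one zero eigenvalue, with eigendirection the $x$-axis, and one nonzero eigenvalue $\lambda$, with eigendirection the $y$-axis. Thus the origin lies at the crossing of a one-dimensional center manifold tangent to the $x$-axis and a one-dimensional strong invariant manifold tangent to the $y$-axis, and the local phase portrait is governed by the flow on the center manifold together with the exponential contraction or expansion along the transverse manifold. Consistently with the stated conclusions I take $\lambda>0$, so that the transverse direction is repelling; the case $\lambda<0$ reduces to this one by reversing time, which replaces $a$ by $-a$ and exchanges the words \emph{stable} and \emph{unstable}.

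The first step is invariant-manifold theory. Since $\lambda\neq 0$ and $A,B$ start at order two, there is a center manifold $W^c=\{y=\phi(x)\}$ with $\phi(x)=O(x^2)$ and a strong unstable manifold tangent to the $y$-axis; the reduced dynamics on $W^c$ is $\dot x=\tilde g(x):=A(x,\phi(x))$. The crux of this step is to verify that $\tilde g$ has the same leading term $a x^\alpha$ as the function $g(x)=A(x,f(x))$ of the statement. Subtracting the nullcline relation $\lambda f+B(x,f)=0$ from the invariance identity $\lambda\phi+B(x,\phi)=\phi'(x)\,A(x,\phi(x))$ gives $(\lambda+O(x))(\phi-f)=\phi'(x)\,\tilde g(x)=O(x\,\tilde g(x))$, whence $\phi-f=O(x\,\tilde g)$; since $A_y=O(x)$ near the origin, $\tilde g-g=A_y\cdot(\phi-f)=O(x^2\,\tilde g)$, so $g$ and $\tilde g$ indeed agree to leading order. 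I would also note that $\alpha$ and $a$ are independent of the (non-unique) choice of $W^c$, because two center manifolds differ by terms flat at the origin while $ax^\alpha$ is a finite-order term; analyticity of $A,B$ guarantees $\tilde g$ is not itself flat, so the symbol $o(x^\alpha)$ is meaningful.

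With this reduction in hand I would appeal to the reduction principle for semi-hyperbolic points: the germ of $X$ at the origin is locally topologically equivalent to the product of the scalar system $\dot x=\tilde g(x)$ with the linear repulsion $\dot y=\lambda y$. The transverse factor always contributes a pair of unstable separatrices tangent to the $y$-axis, so the classification is read off from the scalar flow $\dot x=ax^\alpha+o(x^\alpha)$. If $\alpha$ is odd, $x^\alpha$ changes sign at $0$: for $a>0$ the origin is a one-dimensional source, which combined with the repelling transverse direction is an unstable node; for $a<0$ it is a one-dimensional sink, so both halves of $W^c$ flow into the origin and, together with the transverse unstable directions, form a saddle whose incoming separatrices lie on $W^c$ and are therefore tangent to the $x$-axis. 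If $\alpha$ is even, $x^\alpha>0$ on both sides, so on $W^c$ the origin attracts from one side and repels from the other; the product with the repelling transverse direction produces exactly one parabolic and two hyperbolic sectors, that is, a saddle-node. Its unique stable separatrix is the attracting half of $W^c$ --- the positive $x$-half-axis when $a<0$ and the negative one when $a>0$ --- while the two unstable separatrices are the transverse ones tangent to the $y$-axis, matching the stated tangencies.

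The main obstacle is turning the reduction into a rigorous, ideally self-contained, argument. The delicate point is not the one-dimensional analysis but the construction of the local flow: if one prefers not to quote the reduction principle, one should first straighten the strong unstable manifold and the center manifold onto the axes, bringing $X$ to the decoupled form $\dot x=\tilde g(x)(1+\cdots)$, $\dot y=y\,h(x,y)$ with $h(0,0)=\lambda>0$, and then build invariant box neighborhoods around the two manifolds and track orbits across their faces to count the sectors and identify the separatrices, as in the proof of Theorem 2.19 of \cite{Llibre}. I expect this direct verification of the product structure, and the accompanying control of the higher-order remainders during straightening, to be the hardest part of the proof.
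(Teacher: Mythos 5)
The paper does not prove this theorem at all: Theorem~\ref{th2.19} is stated as a known result, with the reader referred to Theorem 2.19 of \cite{Llibre} for details. So there is no internal proof to compare against, and your proposal has to be judged on its own as a proof of that textbook result.

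Judged that way, it is correct in outline and follows a legitimate route: center-manifold existence plus the reduction principle (Shoshitaishvili), with local model $\dot x=\tilde g(x)$, $\dot y=\lambda y$, does give the node/saddle/saddle-node trichotomy, and your sector counts are right. Two points deserve credit. First, the lemma comparing the nullcline function $g(x)=A(x,f(x))$ of the statement with the reduced dynamics $\tilde g(x)=A(x,\phi(x))$ on a center manifold is precisely the nontrivial content that makes the theorem usable as stated (one computes $f$ from $\lambda y+B=0$, not from the unknown $\phi$), and your subtraction argument giving $\tilde g-g=O(x^2\,\tilde g)$ is correct and simultaneously disposes of the non-uniqueness of $W^c$. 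Second, you correctly noticed that the statement is only consistent with $\lambda>0$, which the paper leaves implicit. Two caveats remain. The analytic core is outsourced to two quoted theorems, which you acknowledge; that is acceptable here but not self-contained. More substantively, the reduction principle yields only a \emph{topological} equivalence, which can never by itself produce the tangency assertions, since tangency is not a topological invariant; you implicitly repair this by reading tangencies off the invariant manifolds themselves, but you still need to argue that the separatrices of the actual system \emph{are} branches of $W^c$ and of the strong unstable manifold (e.g.\ that the boundary orbits of the parabolic sector of the saddle-node are the two branches tangent to the $y$-axis). That identification is exactly where the invariant-box analysis you defer becomes unavoidable, so your assessment of where the real work lies is accurate.
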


\begin{definition}
The singular points which are non-degenerate or semi-hy\-per\-bol\-ic are called {\bf elementary}.
\end{definition}

Finally, we present two other concepts that shall be used in the next sections.

\begin{definition} \label{defFI}
A function $H: U \subset \R^2 \to \R$ of class $C^1$ is called a {\bf first integral} of system \eqref{vectorfield} if $U$ is an open and dense subset of $\R^2$ and $H$ is constant over the solutions of system \eqref{vectorfield} contained in $U$, i.e. $$P \frac{\partial H}{\partial x} + Q \frac{\partial H}{\partial y} = 0,$$ on the points of $U$. If such $H$ exists, then we say system \eqref{vectorfield} is {\bf integrable}.
\end{definition}

\begin{definition} \label{defIC}
Let $\C[x,y]$ denotes the ring of complex polynomials in the variables $x$ and $y$. The polynomial $f(x,y) \in \C[x,y] \setminus \C$ is {\bf invariant} under system \eqref{vectorfield} if $$P \frac{\partial f}{\partial x} + Q \frac{\partial f}{\partial y} = Kf,$$ for some $K \in \C[x,y]$ called the {\bf cofactor} of $f$.

We say the algebraic curve $f(x,y)=0$ is invariant, if $f(x,y)$ is invariant under system \eqref{vectorfield}.
\end{definition}

\subsection{Poincar\'{e} compactification}

Let $X \in P_n(\R^2)$ be a planar polynomial vector field of degree $n$. The {\it Poincar\'{e} compactified vector field $\pi(X)$ corresponding to $X$} is an analytic vector field induced on $\s^2$ as follows (for more details, see \cite{Llibre}).

Let $\s^2 = \{y=(y_1,y_2,y_3) \in \R^3; \ y_1^2 + y_2^2 + y_3^2 = 1\}$ and $T_y\s^2$ be the tangent plane to $\s^2$ at point $y$. Identify $\R^2$ with $T_{(0,0,1)}\s^2$ and consider the central projection $f: T_{(0,0,1)}\s^2 \to \s^2$. This map defines two copies of $X$ on $\s^2$, one in the northern hemisphere and the other in the southern hemisphere. Denote by $X'$ the vector field $\dif f \circ X$ defined on $\s^2$ except on its equador $\s^1 = \{y \in \s^2; \ y_3 = 0\}$. Then, $\s^1$ is identified to the infinity of $\R^2$.

In order to extend $X'$ to a vector field on $\s^2$, including $\s^1$, $X$ should satisfy suitable conditions. In the case that $X \in P_n(\R^2)$, $\pi(X)$ is the only analytic extension of $y_3^{n-1}X'$ to $\s^2$. On $\s^2 \setminus \s^1$ there exist two symmetric copies of $X$, and knowing the behavior of $\pi(X)$ around $\s^1$, one can conclude the behavior of $X$ in a neighborhood of the infinity. The Poincar\'{e} compactification has the property that $\s^1$ is invariant under the flow of $\pi(X)$. The projection of the closed northern hemisphere of $\s^2$ on $y_3=0$ under $(y_1,y_2,y_3) \mapsto (y_1,y_2)$ is called the {\it Poincar\'{e} disc}, and it is denoted by $\D^2$.

\begin{definition}
Two polynomial vector fields $X$ and $Y$ on $\R^2$ are {\bf topologically equivalent} if there exists a homeomorphism on $\s^2$ preserving the infinity $\s^1$ carrying orbits of the flow induced by $\pi(X)$ into orbits of the flow induced by $\pi(Y)$.
\end{definition}

As $\s^2$ is a differentiable manifold, for computing the expression for $\pi(X)$, consider six local charts $U_i = \{y \in \s^2; \ y_i >0\}$ and $V_i = \{y \in \s^2; \ y_i <0\}$, where $i=1,2,3$, and the diffeomorphisms $F_i: U_i \to \R^2$ and $G_i: V_i \to \R^2$, for $i=1,2,3$, which are the inverses of the central projections from the tangent planes at the points $(1,0,0)$, $(-1,0,0)$, $(0,1,0)$, $(0,-1,0)$, $(0,0,1)$ and $(0,0,-1)$, respectively. Denote by $z=(u,v)$ the value of $F_i(y)$ and $G_i(y)$, for any $i=1,2,3$ (so $z$ represents different things according to the local charts under consideration). Hence, after some easy computations, $\pi(X)$ has the following expressions:
    \begin{equation}
        v^n \Delta(z) \left(Q\left(\frac{1}{v},\frac{u}{v}\right)-u P\left(\frac{1}{v},\frac{u}{v}\right), -v P\left(\frac{1}{v},\frac{u}{v}\right)\right) \ \hbox{in} \ U_1,
    \end{equation}
    \begin{equation}
        v^n \Delta(z) \left(P\left(\frac{u}{v},\frac{1}{v}\right)-u Q\left(\frac{u}{v},\frac{1}{v}\right), -v Q\left(\frac{u}{v},\frac{1}{v}\right)\right) \ \hbox{in} \ U_2,
    \end{equation}
    \begin{equation}
        \Delta(z) (P(u,v),Q(u,v)) \ \hbox{in} \ U_3,
    \end{equation}
where $\Delta(z) = (u^2+v^2+1)^{-(n-1)/2}$. The expression for $V_i$ is the same as that for $U_i$ except for a multiplicative factor $(-1)^{n-1}$. In these coordinates for $i=1,2$, $v=0$ always denotes the points of $\s^1$.

After finding the infinite singular points, it is necessary to classify them. Among the hyperbolic singular points at infinity only nodes and saddles can appear. All the semi-hyperbolic singular points can appear at infinity.

If one of these hyperbolic or semi-hyperbolic singularities at infinity is a (topological) saddle, then the straight line $\{v=0\}$, representing the equator of $\s^2$, is necessarily a stable or unstable manifold, or a center manifold (see Figure \ref{infsing}).

The same property also holds for semi-hyperbolic singularities of saddle-node type. They can hence have their hyperbolic sectors split in two different ways depending on the Jacobian matrix of the system in the charts $U_1$ or $U_2$. The Jacobian matrix can be either
    \begin{equation*}
            \left(\begin{array}{cc}
                \lambda & \star \\ 0 & 0 \\
            \end{array} \right), \ \ \ \hbox{or} \ \ \  \left(\begin{array}{cc}
                                                                  0 & \star \\ 0 & \lambda \\
                                                              \end{array} \right),
    \end{equation*}
with $\lambda \neq 0$. In the first case we say that the {\it saddle-node is of type SN1} and in the second case of {\it type SN2}. The two cases are represented in Figure \ref{infsing}. The sense of the orbits can also be the opposite.
								
\begin{figure}[htb!]
		\centering
		\includegraphics[scale=0.5]{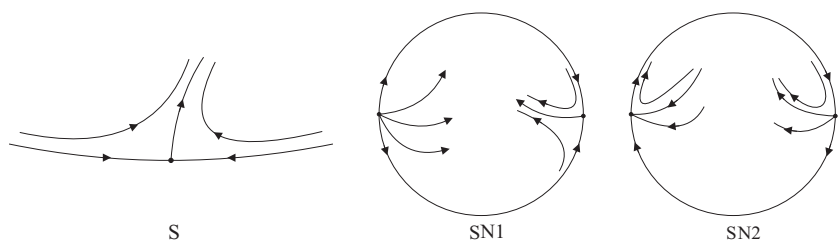}%
		\caption{(S) represents a hyperbolic or semi-hyperbolic saddle on the equator of $\s^2$; (SN1) and (SN2) represent, respectively, saddle-nodes of types SN1 and SN2 of $\pi(X)$ in the equator of $\s^2$}%
		\label{infsing}%
\end{figure}

\section{Analysis of the system} \label{secanalysis}

Here, we provide a mathematical analysis of system \eqref{SIS}.

Provided that $b \neq 0$, system \eqref{SIS} has only two finite singular points: \linebreak $p = ((c+m)/b,(-c+bk-m)/b)$, usually known as endemic steady state, and $q = (k,0)$, usually known as disease-free steady state. In addition, both finite singular points $p$ and $q$ are the same if $bk = c$.

The names given above to each steady states are not an accident. In $q$, the number of susceptible individuals is equal to the population size $k$, whereas the number of infected people is null. On the other hand, the number of susceptible people in $p$ is the recovery coefficient plus the death rate divided by the infection coefficient, while the infected ones are the rest of the population, which leads to the presence of infected people, since $bm \neq 0$. Finally, note that only non-negative values of $x$ and $y$ are interesting here, because they represent the number of individuals.

First, we start with the analysis of the endemic steady state $p$. Translating the singular point $p$ to the origin in system \eqref{SIS}, we obtain
    \begin{equation}
        \begin{array}{ccl}
            \dot{x} & = & -bkx+cx-my-bxy, \\
            \dot{y} & = & (-c+bk-m)x+bxy, \\
        \end{array}
        \label{SIS2}
    \end{equation}
which is equivalent to \eqref{SIS}. The Jacobian matrix of \eqref{SIS2} is given by
    \begin{equation*}
        \J(x,y) = \left(
                        \begin{array}{cc}
                            c-bk-by & -m-bx \\
                            -c+bk-m+by & bx \\
                        \end{array}
                  \right),
    \end{equation*}
which implies that $$\delta = \delta(0,0) = (bk-c-m)m \ \ \hbox{and} \ \ \tau = \tau(0,0) = -bk+c.$$

If $(bk-c-m)m < 0$, then $p$ is a saddle point. On the other hand, if $(bk-c-m)m > 0$, then $p$ is a node point, because $\tau^2 - 4\delta = (c-bk+2m)^2 \geq 0$.

In the case that $(bk-c-m)m = 0$, or equivalently, $m=bk-c$, then $p$ is degenerate. Indeed, it is the case that both finite singular points are the same, i.e. $p=q=(k,0)$. Here, system \eqref{SIS2} becomes
    \begin{equation}
        \begin{array}{ccl}
            \dot{x} & = & -mx-my-bxy, \\
            \dot{y} & = & bxy, \\
        \end{array}
        \label{SIS3}
    \end{equation}
whose Jacobian matrix at $(0,0)$ is
    \begin{equation*}
        \J(0,0) = \left(
                        \begin{array}{cc}
                            -m & -m \\
                            0 & 0 \\
                        \end{array}
                  \right),
    \end{equation*}
so that $p$ is a semi-hyperbolic point. By a linear change of coordinates, system \eqref{SIS3} can be put on the form of system \eqref{eqth2.19} and, applying Theorem \ref{th2.19}, we conclude that $p$ is a saddle-node point.

From the biological point of view, when the death rate $m$ is equal to the portion of the population which becomes infected ($bk$) minus the recovery coefficient, the dynamics around the steady states $p$ and $q$ changes and they become only one point which attracts (the node part) and repels (the saddle part) the orbits in its neighborhood.

Now, we analyze the disease-free steady state $q$. Translating the singular point $q$ to the origin in system \eqref{SIS}, we obtain
    \begin{equation}
        \begin{array}{ccl}
            \dot{x} & = & -mx+cy-bky-bxy, \\
            \dot{y} & = & (-c+bk-m)y+bxy, \\
        \end{array}
        \label{SIS4}
    \end{equation}
which is equivalent to \eqref{SIS}. The Jacobian matrix of \eqref{SIS4} is given by
    \begin{equation*}
        \J(x,y) = \left(
                        \begin{array}{cc}
                            -m-by & c-b(k+x) \\
                            by & -c-m+b(k+x) \\
                        \end{array}
                  \right),
    \end{equation*}
which implies that $$\delta = \delta(0,0) = -(bk-c-m)m \ \ \hbox{and} \ \ \tau= \tau(0,0) = bk-c-2m.$$

If $-(bk-c-m)m < 0$, then $q$ is a saddle point. In contrast, if \linebreak $-(bk-c-m)m > 0$, then $q$ is a node point, because $\tau^2 - 4\delta = (c-bk)^2 \geq 0$.

The case $(bk-c-m)m = 0$, or equivalently $m=bk-c$, has already been studied, and $p=q$ is a semi-hyperbolic saddle-node point.

Finally, we have proved the following:

\begin{proposition} \label{propfinite}
Consider system \eqref{SIS} with $bm\neq 0$ and its two finite steady states $p$ and $q$. Then:
    \begin{enumerate}
        \item If either $m>0$ and $m>bk-c$, or $m<0$ and $m<bk-c$, then $p$ is a saddle and $q$ is a node;
        \item If either $m>0$ and $m<bk-c$, or $m<0$ and $m>bk-c$, then $p$ is a node and $q$ is a saddle;
        \item If $m=bk-c$, then $p=q$ is a semi-hyperbolic saddle-node.
    \end{enumerate}
\end{proposition}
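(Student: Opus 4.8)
The plan is to read off the conclusion directly from the two non-degenerate classification criteria recorded in Section~\ref{secbasicresults} (saddle when $\delta<0$; node when $\delta>0$ and $\tau^2-4\delta>0$), together with the semi-hyperbolic analysis already carried out for the coincident case. Since the Jacobian determinants have been computed as $\delta_p=(bk-c-m)m$ at $p$ and $\delta_q=-(bk-c-m)m$ at $q$, the whole statement reduces to tracking the sign of the single quantity $(bk-c-m)m$ across the three regimes in the hypotheses.

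First I would record the elementary sign equivalence
\begin{equation*}
    m>bk-c \iff bk-c-m<0, \qquad m<bk-c \iff bk-c-m>0,
\end{equation*}
and combine it with the sign of $m$. In the two alternatives of item~1 (namely $m>0$ with $m>bk-c$, or $m<0$ with $m<bk-c$) the factors $m$ and $bk-c-m$ have opposite signs, so $(bk-c-m)m<0$; in the two alternatives of item~2 they share the same sign, so $(bk-c-m)m>0$. Thus item~1 is precisely the regime $\delta_p<0<\delta_q$ and item~2 the regime $\delta_q<0<\delta_p$, which is why each pair of seemingly separate conditions collapses into one case.

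Next I would apply the classification. In item~1, $\delta_p<0$ makes $p$ a saddle at once, while $\delta_q>0$ together with $\tau^2-4\delta_q=(c-bk)^2\geq0$ places $q$ among the nodes; item~2 is the mirror image, with the roles of $p$ and $q$ exchanged and the discriminant for $p$ being the perfect square $(c-bk+2m)^2\geq0$. Item~3 is the degenerate boundary $bk-c-m=0$, where $\delta_p=\delta_q=0$ and the two steady states coincide; here I would simply invoke the computation already done for system~\eqref{SIS3}, which via Theorem~\ref{th2.19} identifies $p=q$ as a semi-hyperbolic saddle-node.

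The only point needing a word of care---and the closest thing to an obstacle---is to confirm that the node cases are genuine topological nodes rather than foci or centers when the discriminant fails to be strictly positive. This is immediate once one notices that in each node case the discriminant is a perfect square, so the eigenvalues of $\J(0,0)$ are real; moreover $\delta>0$ forces $\tau^2=4\delta+(\text{that square})\geq4\delta>0$, whence $\tau\neq0$ and no center or weak focus can arise. The limiting subcase where the square vanishes yields equal real eigenvalues, i.e. a star or improper node, which is still topologically a node, so the assertions in items~1 and~2 hold without exception.
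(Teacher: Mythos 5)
Your proposal is correct and follows essentially the same route as the paper: translating each steady state to the origin, reading off $\delta_p=(bk-c-m)m$ and $\delta_q=-(bk-c-m)m$ from the Jacobians, tracking the sign of $(bk-c-m)m$ across the three regimes, and invoking Theorem \ref{th2.19} for the coincident case. Your extra remark that the perfect-square discriminant forces $\tau\neq 0$ (so the node cases cannot degenerate into a focus or center, and equal eigenvalues still give a topological node) is a small but welcome tightening of the paper's bare observation that $\tau^2-4\delta\geq 0$.
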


Having classified all the finite singular points, we apply the Poincar\'{e} compactification to study the infinite singularities.

In the local chart $U_1$, where $x= 1/v$ and $y=u/v$, we have:
    \begin{equation}
        \begin{array}{ccl}
            \dot{u} & = & u(b+bu-cv-cuv-kmv^2), \\
            \dot{v} & = & v(bu+mv-cuv-kmv^2), \\
        \end{array}
        \label{SIS5}
    \end{equation}
whose infinite singular points are $(0,0)$ and $(-1,0)$, which are a saddle-node of type SN1 (by Theorem \ref{th2.19}) and a node, respectively.

In the local chart $U_2$, where $x=u/v$ and $y=1/v$, the system
    \begin{equation}
        \begin{array}{ccl}
            \dot{u} & = & -bu-bu^2+cv+cuv+kmv^2, \\
            \dot{v} & = & v(-bu+cv+mv) \\
        \end{array}
        \label{SIS6}
    \end{equation}
has two infinite singular points $(0,0)$ and $(-1,0)$. The latter one is a node and is the same as $(-1,0) \in U_1$, while the former one is a saddle-node of type SN1.

We have just proved the following:

\begin{proposition} \label{propinfinite}
The infinite singular points of system \eqref{SIS} are the origin of charts $U_1$, $V_1$, $U_2$ and $V_2$, which are saddle-node points of type SN1, and $(-1,0)$, belonging to each of the charts $U_1$ and $U_2$, which is a node point.
\end{proposition}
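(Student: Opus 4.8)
The plan is to pass to the Poincar\'{e} compactification of system \eqref{SIS}, which is quadratic ($n=2$), and to read off the flow on the line at infinity $\{v=0\}$ in the charts $U_1$ and $U_2$. First I would substitute $P$ and $Q$ into the chart expressions from Section~\ref{secbasicresults}, drop the positive factor $\Delta(z)$ (a mere time rescaling that changes neither the singular points nor their local topology), and clear the common power of $v$ to obtain the polynomial systems \eqref{SIS5} and \eqref{SIS6}. Because each carries a factor $v$ in $\dot v$, the equator $\{v=0\}$ is invariant, as it must be; setting $v=0$ then reduces the search for infinite singularities to the one-variable equation $bu(1+u)=0$ in $U_1$ and $-bu(1+u)=0$ in $U_2$, whose roots $u=0$ and $u=-1$ single out the candidate points $(0,0)$ and $(-1,0)$ in each chart.

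Next I would classify these points through the Jacobian matrices of \eqref{SIS5} and \eqref{SIS6}. At $(-1,0)$ the Jacobian works out to $-b$ times the identity, so both eigenvalues equal $-b$, giving $\delta=b^2>0$ and $\tau^2-4\delta=0$; by the non-degenerate classification stated in Section~\ref{secbasicresults} this is a node. At the origin of either chart the Jacobian is degenerate ($\delta=0$) but has nonzero trace ($\tau=b$ in $U_1$, $\tau=-b$ in $U_2$), so the point is semi-hyperbolic and Theorem~\ref{th2.19} applies. The diagonal form $\bigl(\begin{smallmatrix} b & 0 \\ 0 & 0 \end{smallmatrix}\bigr)$ in $U_1$ and the upper-triangular form $\bigl(\begin{smallmatrix} -b & c \\ 0 & 0 \end{smallmatrix}\bigr)$ in $U_2$ are both of the shape $\bigl(\begin{smallmatrix} \lambda & \star \\ 0 & 0 \end{smallmatrix}\bigr)$ with $\lambda\neq0$, which is exactly the SN1 pattern described in Section~\ref{secbasicresults}.

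The heart of the argument, and the step I expect to be most delicate, is verifying that each degenerate origin is genuinely a saddle-node rather than a node or a pure saddle. For this I would put the system into the normal form \eqref{eqth2.19}: in $U_1$ no change is needed, the center manifold is the coordinate line $u=0$, and the reduced flow along it is $\dot v = m v^2 + o(v^2)$; in $U_2$ the off-diagonal entry $c$ forces a preliminary linear change of coordinates taking the center eigendirection $(c,b)$ to an axis, after which the center manifold $u=\phi(v)=(c/b)v+o(v)$ again produces $\dot v = m v^2 + o(v^2)$. In both charts the leading exponent is $\alpha=2$, which is even, and the leading coefficient is $a=m\neq0$; Theorem~\ref{th2.19} then yields a saddle-node, while the triangular shape of the Jacobian fixes the type as SN1.

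Finally I would carry the conclusion over to the charts $V_1$ and $V_2$. Since $n=2$, the vector field in $V_i$ equals that in $U_i$ up to the factor $(-1)^{n-1}=-1$, that is, up to a reversal of time; this leaves the determinant unchanged and only reverses the orientation of the orbits, so a node remains a node and an SN1 saddle-node remains an SN1 saddle-node. It then remains to reconcile the overlaps: the point $(-1,0)\in U_1$, which represents the direction $y/x=-1$, is the same point at infinity as $(-1,0)\in U_2$, where it reads as $x/y=-1$, so the node is counted once, whereas the four origins correspond to the distinct directions of the $x$- and $y$-axes and stay separate. Collecting these facts gives precisely the list in the statement.
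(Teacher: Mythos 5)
Your proposal is correct and follows essentially the same route as the paper: compute the compactified systems \eqref{SIS5} and \eqref{SIS6} in the charts $U_1$ and $U_2$, locate the singularities on $\{v=0\}$, classify $(-1,0)$ as a node via its Jacobian, and apply Theorem \ref{th2.19} to the semi-hyperbolic origins to get saddle-nodes of type SN1. In fact you supply more detail than the paper does (the explicit Jacobians, the center-manifold reduction giving $\dot v = mv^2+o(v^2)$ with $a=m\neq0$ and $\alpha=2$, and the antipodal identifications), whereas the paper merely asserts these classifications; the only cosmetic point is that at $(-1,0)$ one has $\tau^2-4\delta=0$ with Jacobian $-bI$, a star node, which is still topologically a node even though the paper's stated criterion uses the strict inequality.
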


Knowing the local behavior around finite and infinite singular points, another useful tool to describe the phase portraits of differential systems is the existence of invariant curves. The next result shows system \eqref{SIS} has at least three invariant straight lines.

\begin{proposition} \label{propinvlines}
Let $bm \neq 0$. System \eqref{SIS} has at least three invariant straight lines given by $f_1(x,y) = y$ and $f_2(x,y) = k-x-y$, and additionally $f_3(x,y) = k-x$, if $c=bk$.
\end{proposition}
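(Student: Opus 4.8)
The plan is to verify the three invariances directly against Definition~\ref{defIC}: a line $f_i=0$ is invariant under \eqref{SIS} exactly when the polynomial $P\,\partial f_i/\partial x + Q\,\partial f_i/\partial y$ is divisible by $f_i$, in which case the quotient is the cofactor $K_i$. With $P=-bxy-mx+cy+mk$ and $Q=bxy-(m+c)y$, each check reduces to substituting the (constant) partial derivatives of $f_i$ and then performing an elementary factorization, so I would simply carry out the three computations and read off the cofactors.

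For $f_1=y$ only the $y$-derivative survives, so the left-hand side is $Q$ itself; since $Q=y\,(bx-m-c)$ already carries the factor $y$, the line is invariant with cofactor $K_1=bx-(m+c)$. For $f_2=k-x-y$ both partials equal $-1$, so the left-hand side is $-(P+Q)$; adding the two components, the nonlinear terms $\mp bxy$ and the linear terms $\pm cy$ cancel, leaving $P+Q=-m(x+y-k)=m\,(k-x-y)$. Hence $f_2=0$ is invariant with cofactor $K_2=-m$. Neither computation uses any relation among the parameters, so these two lines are present for all admissible $b,c,k,m$.

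The third line is where the hypothesis enters. For $f_3=k-x$ only the $x$-derivative contributes, giving the left-hand side $-P=bxy+mx-cy-mk$. For a generic $c$ this polynomial is \emph{not} a multiple of $k-x$, so no polynomial cofactor exists and the line fails to be invariant; the role of the assumption $c=bk$ is precisely to produce the needed factor. Substituting $c=bk$ I would regroup $-P=by\,(x-k)+m\,(x-k)=(x-k)(by+m)$, so that $-P=-(by+m)(k-x)$ and $f_3=0$ is invariant with cofactor $K_3=-(by+m)$.

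I do not anticipate a genuine obstacle, since the whole argument is a finite explicit verification; the only point requiring attention is the factorization step for $f_3$, which is exactly what singles out the value $c=bk$. As a sanity check one may record the geometric content of each identity: on $y=0$ one has $\dot y=0$, on $k-x-y=0$ one has $\dot x+\dot y=0$, and on $x=k$ one has $\dot x=y\,(c-bk)$, which vanishes identically along the line precisely when $c=bk$.
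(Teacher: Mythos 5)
Your verification is correct and follows exactly the paper's approach: the paper's proof simply exhibits the cofactors $K_1=bx-m-c$, $K_2=-m$, and $K_3=-m-by$ (the last under $c=bk$), which agree with the ones you compute. You merely spell out the factorizations that the paper leaves implicit.
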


\begin{proof}
By Definition \ref{defIC}, we can find $K_1(x,y) = bx-m-c$, $K_2(x,y) = -m$ and $K_3(x,y) = -m-by$ as the cofactors of $f_1(x,y)$, $f_2(x,y)$ and $f_3(x,y)$ (if $c=bk$), respectively.
\end{proof}

\section{Main result} \label{secmainresult}

From Propositions \ref{propfinite} and \ref{propinfinite} we get all the information about the local behavior of finite and infinite singular points, respectively. Using the continuity of solutions and primary definitions and results of ODE (e.g. $\omega$-limit sets, existence and uniqueness of solutions, the Flow Box Theorem etc. \cite{Llibre}) and the existence of invariant straight lines of system \eqref{SIS} stated by Proposition \ref{propinvlines}, its global phase portraits can be easily drawn.

Essentially, we have only two cases. The finite steady state $q$ is the intersection of the invariant curves $f_1(x,y)=f_2(x,y)=0$, and the other finite steady state $p$ lies on the curve $f_2(x,y)=0$.

According to items (1) and (2) of Proposition \ref{propfinite}, $p$ (respectively, $q$) is a saddle (respectively, a node) the one way and the other a node (respectively, a saddle). The subtle difference here is the position of point $p$. While $q$ remains on the line $\{y=0\}$, $p$ is in the lower part of the Poincar\'{e} disc the one way and the other in the upper part. It is worth mentioning that the four infinite singular points continue to be the same points no matter what conditions are being considered. The phase portrait of both cases above is topologically equivalent to the one which is shown in Figure \ref{phaseportraits}(a).

Item (3) of Proposition \ref{propfinite} assumes the existence of only one finite singular point, $p=q$. Here, when $m=bk-c$, both $p$ and $q$ become only one degenerate singularity which bifurcates into a saddle-node point. Again, no changes are applied to the infinite singular points. The phase portrait of this case is topologically equivalent to the one which is shown in Figure \ref{phaseportraits}(b).

Finally, Theorem \ref{mainthm} has been proved.

\section{Conclusions and discussions} \label{secconcl}

The last section proves the existence of only two classes of global phase portraits of the quadratic system \eqref{SIS}. In the qualitative theory of ODE it is quite important to know the global behavior of solutions of systems and, in general, this is not an easy task. The most frequently used tools for this propose are the study of local behavior along with the (local and global stability), integrability, and also the global phase portrait, which was employed in the present study.

In the case represented by Figure \ref{phaseportraits}(a), it is clear that while the steady state $q$ characterizes the presence of only susceptible individuals, $p$ indicates the mutual presence of susceptible and infected people. Besides, as $q$ is an asymptotically stable node, the disease seems to be controlled and the whole population tends to be healthy but susceptible to be infected again. As $p$ is an unstable saddle steady state, it suggests that there is no harmony between the number of susceptible people and infected ones, although some of the solutions tend to $q$, indicating the control of the disease.

In case of Figure \ref{phaseportraits}(b), all the solutions tend to $q$ (regarding that $x,y>0$), i.e. if $m=bk-c$, the disease is supposed to be controlled and the whole population is inclined to be healthy but susceptible to the reinfection.

\section*{Acknowledgements}

\small{
We would like to thank professor Jaume Llibre, from Universitat Autònoma de Barcelona, for proposing this study, which has contributed for the second author's Ph.D. theses. Both authors are supported by the joint project CAPES/DGU grant 222/2010. The second author has been supported by a Ph.D. CAPES grant.
}

\end{document}